\title{Some new maximum VC classes}
\author{Hunter R. Johnson\footnote{John Jay College, CUNY, 899 10th Ave  New York, NY 10019} \thanks{hujohnson@jjay.cuny.edu} \footnote{Keywords: combinatorial problems, computational geometry, VC dimension}}
\newtheorem{lemma}{Lemma}[section] 
\newtheorem{corollary}{Corollary}[section] 
\newtheorem{theorem}{Theorem}[section] 
\newtheorem{proposition}{Proposition}[section] 
\renewcommand{\phi}{\varphi}
\begin{document}
\maketitle
\begin{abstract}
Set systems of finite VC dimension are frequently used in applications relating to machine learning theory and statistics.  Two simple types of VC classes which have been widely studied are the maximum classes (those which are extremal with respect to Sauer's lemma) and so-called Dudley classes, which arise as sets of positivity for linearly parameterized functions.  These two types of VC class were related by Floyd, who gave sufficient conditions for when a Dudley class is maximum.  It is widely known that Floyd's condition applies to positive Euclidean halfspaces and certain other classes, such as sets of positivity for univariate polynomials. 

In this paper we show that Floyd's lemma applies to a wider class of linearly parameterized functions than has been formally recognized to date. In particular we show that, modulo some minor technicalities, the sets of positivity for any linear combination of real analytic functions is maximum on points in general position.  This includes sets of positivity for multivariate polynomials as a special case.

\end{abstract}

\section{Introduction}
Maximum set systems are in some sense the most perfect systems of finite VC dimension.  They arise most notably from the systems given by ``positive" half-spaces in Euclidean space. They also arise as the dual set system associated with a simple arrangement of hyperplanes.  Their desirable features include a certain kind of recursive structure which allows for, among other things, the existence of so-called sample compression schemes, and as such they are central to most approaches to proving the long outstanding sample compression conjecture \cite{BL98,Fl89,FlWa95,BIP,KW}. Some further uses of maximum set systems exist in machine learning and model theory \cite{LBS, GuHi11,J11}. 


In what follows we first provide the definitions for the basic notions of interest, including set systems, VC dimension, the maximum property and linearly parameterized set systems.  We then go on to establish our results in the subsequent section.

Our results relate to two criteria given by Floyd which are sufficient for a linearly parameterized set system to have the maximum property.  While several specific applications of Floyd's theorem have been given, other powerful applications seem to have been overlooked.  In particular, there seems to be no mention in the literature that Floyd's result applies to general multivariate (rather than univariate) polynomials.  More generally we note the important fact that any linear combination of analytic functions satisfies Floyd's criteria.  

\section{Basic definitions}

Let $X$ be a set and $\mathcal{P}(X)$ its power set.  We call any $\mathcal{C} \subseteq \mathcal{P}(X)$ a \textit{set system} on $X$.  For any $X_0 \subseteq X$, we let $\mathcal{C}\vert_{X_0}$ denote $\{C \cap X_0 : C \in \mathcal{C}\}$.  We say that $\mathcal{C}$ \textit{shatters} $X_0 \subseteq X$ if $\mathcal{C}\vert_{X_0} = \mathcal{P}(X_0)$.

The \textit{Vapnik-Chervonenkis (VC) dimension} \cite{VaCh71} of $\mathcal{C}$, when $\mathcal{C}$ is non-empty, is defined as
$$\text{VC}(\mathcal{C}) = \text{sup}\{|X_0| : X_0 \subseteq X \, \text{is shattered by } \mathcal{C}\}.$$

When $\mathcal{C} = \emptyset$ we will use the convention that VC$(\mathcal{C}) = -1$.

If $\text{VC}(\mathcal{C})$ is finite then $\mathcal{C}$ is said to be a \textit{VC-class}. For natural numbers $n$ and $k$, define
\[
{n \choose \leq k} = \begin{cases} \sum_{i=0}^k {n \choose i} &\mbox{if } n \geq k \\
2^k & \mbox{if } n<k. \end{cases}
\]

 A key combinatorial fact relating to VC classes is Sauer's lemma \cite{Sa72,Sh72}.
 
\begin{lemma}
Let $\alpha = \text{VC}(\mathcal{C})$.  Then for any $X_0 \subseteq X$

$$ |\mathcal{C}\vert_{X_0}| \leq {|X_0| \choose \leq \alpha}.$$
\end{lemma}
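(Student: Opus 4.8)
The plan is to reduce to a finite set system and then prove the bound by a double induction on $|X_0|$ and $\alpha$, with the inductive step powered by Pascal's rule. Both sides of the claimed inequality depend only on the trace $\mathcal{F} := \mathcal{C}\vert_{X_0}$, and since any subset of $X_0$ shattered by $\mathcal{F}$ is \emph{a fortiori} shattered by $\mathcal{C}$, we have $\text{VC}(\mathcal{F}) \leq \alpha$. It therefore suffices to prove the finite statement: whenever $Y$ is a finite set of size $n$ and $\mathcal{F} \subseteq \mathcal{P}(Y)$ has $\text{VC}(\mathcal{F}) \leq \alpha$, then $|\mathcal{F}| \leq {n \choose \leq \alpha}$. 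I would dispose of the degenerate ranges first: if $\mathcal{F} = \emptyset$ the claim is immediate, and if $\alpha \geq n$ then $|\mathcal{F}| \leq 2^{n} \leq {n \choose \leq \alpha}$, since the stated convention makes the right-hand side equal to $2^{n}$ when $\alpha = n$ and to $2^{\alpha} \geq 2^{n}$ when $\alpha > n$.

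For the inductive step I would fix an element $x \in Y$, set $Y' = Y \setminus \{x\}$, and count $\mathcal{F}$ through the restriction map $F \mapsto F \setminus \{x\}$. This map is injective on $\mathcal{F}$ except that it merges each pair $F,\, F \cup \{x\}$ that both belong to $\mathcal{F}$ into a single subset of $Y'$. Writing
$$\mathcal{F}_1 = \mathcal{F}\vert_{Y'}, \qquad \mathcal{F}_2 = \{\, F \subseteq Y' : F \in \mathcal{F} \text{ and } F \cup \{x\} \in \mathcal{F} \,\},$$
this accounting yields the exact identity $|\mathcal{F}| = |\mathcal{F}_1| + |\mathcal{F}_2|$. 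One then observes that $\text{VC}(\mathcal{F}_1) \leq \alpha$, because passing to a restriction cannot increase VC dimension, and---this is the heart of the matter---that $\text{VC}(\mathcal{F}_2) \leq \alpha - 1$: if $\mathcal{F}_2$ shatters some $S \subseteq Y'$, then for every $T \subseteq S$ there is $F \in \mathcal{F}_2$ with $F \cap S = T$, so that both $F$ and $F \cup \{x\}$ lie in $\mathcal{F}$; this exhibits both $T$ and $T \cup \{x\}$ as traces on $S \cup \{x\}$, showing that $\mathcal{F}$ shatters $S \cup \{x\}$ and hence that $|S| + 1 \leq \alpha$.

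With both bounds in hand, the induction hypothesis gives $|\mathcal{F}_1| \leq {n-1 \choose \leq \alpha}$ and $|\mathcal{F}_2| \leq {n-1 \choose \leq \alpha - 1}$, and it remains only to invoke the recurrence
$${n-1 \choose \leq \alpha} + {n-1 \choose \leq \alpha - 1} = {n \choose \leq \alpha},$$
which follows by summing the ordinary Pascal identity ${n-1 \choose i} + {n-1 \choose i-1} = {n \choose i}$ termwise. I expect the main obstacle to be the VC-dimension bound on $\mathcal{F}_2$---the step that lifts a shattering by the ``doubled'' family $\mathcal{F}_2$ to a shattering of the enlarged set $S \cup \{x\}$ by $\mathcal{F}$---together with the bookkeeping needed to keep the piecewise definition of ${n \choose \leq k}$ consistent at the boundary of the induction (in particular when $\alpha = 0$ or when $n$ descends into the regime $n < \alpha$). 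An appealing alternative I would keep in reserve is the downward-shifting argument: repeatedly replace each $F \ni x$ by $F \setminus \{x\}$ whenever the latter is absent, obtaining a cardinality-preserving, VC-nonincreasing operator whose stable image is a downward-closed family, for which the members are exactly the shattered sets and hence number at most ${n \choose \leq \alpha}$.
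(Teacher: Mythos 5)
Your proof is correct, but there is nothing in the paper to compare it against: the paper states this lemma (Sauer's lemma) as a known fact, citing Sauer \cite{Sa72} and Shelah \cite{Sh72}, and gives no proof of its own, using it only as background for the definition of maximum classes. So the only question is whether your argument stands on its own, and it does --- it is the standard induction proof of this classical result. The reduction to the finite trace $\mathcal{F} = \mathcal{C}\vert_{X_0}$ with $\text{VC}(\mathcal{F}) \leq \alpha$ is sound (and the finite reading is the intended one, since the paper's ${n \choose \leq k}$ notation is defined only for natural numbers). The counting identity $|\mathcal{F}| = |\mathcal{F}_1| + |\mathcal{F}_2|$ is exact; the lifting step proving $\text{VC}(\mathcal{F}_2) \leq \alpha - 1$ is the crux and you carry it out correctly (that $x \notin F$ for $F \in \mathcal{F}_2$ and $x \notin S$ is precisely what makes both traces $T$ and $T \cup \{x\}$ on $S \cup \{x\}$ realized by $\mathcal{F}$); and disposing of the range $\alpha \geq n$ first keeps the whole induction in the regime $\alpha < n$, where the paper's piecewise definition reduces to the plain sum $\sum_{i=0}^{\alpha}{n \choose i}$ and Pascal's rule applies termwise. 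The one boundary you flagged, $\alpha = 0$, resolves itself: a nonempty $\mathcal{F}_2$ would shatter $\emptyset$, forcing $\mathcal{F}$ to shatter $\{x\}$ and contradicting $\alpha = 0$; hence $\mathcal{F}_2 = \emptyset$ and $|\mathcal{F}| = |\mathcal{F}_1| \leq {n-1 \choose \leq 0} = {n \choose \leq 0}$ directly, with no need to interpret ${n-1 \choose \leq -1}$. The shifting argument you hold in reserve is the other standard route and would also work.
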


  We say that $\mathcal{C}$ is \textit{maximum} \cite{We87} of VC dimension $\alpha$ if for any finite $X_0 \subseteq X$
$$ |\mathcal{C}\vert_{X_0}| = {|X_0| \choose \leq \alpha}.$$

Many set systems arise naturally as the family of sets defined by a parameterized formula in a mathematical structure. For instance, let $X$ be a set, $A$ a parameter set, and $f:X\times A \rightarrow \mathbb{R}$ a real-valued function.  We use the notation $f_a:X \rightarrow \mathbb{R}$ to represent the function defined by $x \mapsto f(x,a)$.  Let $pos(f_a) = \{x \in X: f_a(x) > 0 \}$ and $Pos(f) = \{pos(f_a(x)) :a \in A\}$. Then $Pos(f)$ is a set system on $X$ and has a well-defined VC dimension.

An interesting case occurs when $f$ parameterizes a vector space of real-valued functions.  Specifically, suppose that $f_i:X \to \mathbb{R}$ for $i=1,2,\ldots,n$ are linearly independent real-valued functions,  and $f_0:X \rightarrow \mathbb{R}$ is a real-valued function. Let $\mathcal{F} = \{a_1f_1(x) + a_2f_2(x) + \cdots +a_nf_n(x): a_1,\ldots,a_n \in \mathbb{R}\}$, and define $f_0(x) - \mathcal{F}$ to mean $\{f_0(x) - f(x): f \in \mathcal{F}\}.$ Then $\mathcal{F}$ is a real vector space, and $f_0(x) - \mathcal{F}$ is an affine real vector space.  We will use  $Pos(f_0 - \mathcal{F})$ to denote $\{pos(f_0-f): f \in \mathcal{F}\}$.  Set systems of the form $Pos(f_0 - \mathcal{F})$ have been called \textit{Dudley classes} \cite{BL98}.  


The following theorem is due to Dudley \cite{DW,Du99}. Cover  proved a similar (non-affine) result in \cite{Co65}.

\begin{theorem} \label{TTTTT}
If $\mathcal{F}$ is an $n$-dimensional real vector space of real-valued functions defined on $X$, and $f_0:X \rightarrow \mathbb{R}$, then  
$\text{VC}(Pos(f_0-\mathcal{F})) = n$.
\end{theorem}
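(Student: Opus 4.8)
The plan is to establish the two inequalities $\text{VC}(Pos(f_0-\mathcal{F})) \le n$ and $\text{VC}(Pos(f_0-\mathcal{F})) \ge n$ separately, in each case passing through the evaluation map. For a finite $S = \{x_1,\ldots,x_m\} \subseteq X$, let $T_S : \mathcal{F} \to \mathbb{R}^m$ be the linear map $T_S(f) = (f(x_1),\ldots,f(x_m))$. The whole argument rests on translating geometric facts about the subspace $T_S(\mathcal{F}) \subseteq \mathbb{R}^m$ into combinatorial facts about which subsets of $S$ can be realized, where realizing $B \subseteq S$ means finding $f \in \mathcal{F}$ with $pos(f_0-f) \cap S = B$, i.e. writing $g = f_0 - f$, having $g(x_j) > 0$ for $x_j \in B$ and $g(x_j) \le 0$ otherwise.

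For the upper bound I would take an arbitrary $S$ with $|S| = n+1$ and show it cannot be shattered. Since $\dim \mathcal{F} = n$, the image $T_S(\mathcal{F})$ has dimension at most $n$ in $\mathbb{R}^{n+1}$, so its orthogonal complement contains a nonzero vector $c = (c_1,\ldots,c_{n+1})$ with $\sum_j c_j f(x_j) = 0$ for every $f \in \mathcal{F}$. The key observation is that for $g = f_0 - f$ this gives $\sum_j c_j\, g(x_j) = \sum_j c_j f_0(x_j) =: K$, a constant independent of the choice of $f \in \mathcal{F}$. Partition the indices into $P = \{j : c_j > 0\}$, $N = \{j : c_j < 0\}$, $Z = \{j : c_j = 0\}$; after replacing $c$ by $-c$ if necessary we may assume $P \neq \emptyset$. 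If the pattern $B = \{x_j : j \in P\}$ were realizable, then $g > 0$ on $P$ (giving strictly positive terms $c_j g(x_j)$) and $g \le 0$ on $N \cup Z$ (giving terms $\ge 0$), forcing $K > 0$. If instead the complementary pattern $B = \{x_j : j \notin P\}$ were realizable, the same sign bookkeeping forces $K \le 0$. Since $K$ is one fixed number, at least one of these two subsets of $S$ is not realizable, so $S$ is not shattered and $\text{VC} \le n$.

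For the lower bound I would produce $n$ points that are shattered. The engine here is the standard fact that linearly independent real functions $f_1,\ldots,f_n$ admit points $x_1,\ldots,x_n \in X$ with $\det\big(f_i(x_j)\big)_{i,j} \neq 0$; I would prove this by induction on $n$, at each step forming the function $x \mapsto \det$ of the matrix obtained by appending the column $(f_1(x),\ldots,f_n(x))^{\mathsf T}$ to an already-nonsingular $(n-1)\times(n-1)$ block, observing that its cofactor expansion is a nontrivial combination of the $f_i$ and hence not identically zero. With such points, $T_S : \mathcal{F} \to \mathbb{R}^n$ is a linear bijection, so every target vector is attained: given any $B \subseteq S$, choosing target values $t_j = f_0(x_j) - 1$ for $x_j \in B$ and $t_j = f_0(x_j) + 1$ otherwise yields $f \in \mathcal{F}$ with $(f_0-f)(x_j) = 1 > 0$ exactly on $B$, so $S$ is shattered and $\text{VC} \ge n$. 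Combining the two bounds gives equality.

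I expect the main obstacle to be the sign analysis in the upper bound rather than the linear algebra: one must handle the role of the affine shift $f_0$ (which is exactly what produces the constant $K$) and verify that the chosen complementary patterns force incompatible signs even in the degenerate cases where $N$ or $Z$ is empty. The lower bound is comparatively routine once the nonsingular-evaluation-matrix lemma is in hand, and the careful normalization $P \neq \emptyset$ is what makes a single pair of complementary patterns suffice uniformly across all sign configurations of $c$.
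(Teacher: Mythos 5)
Your proof is correct, but there is nothing in the paper to compare it against directly: the paper states Theorem \ref{TTTTT} without proof, attributing it to Dudley and Wenocur--Dudley \cite{DW,Du99}, with Cover's non-affine analogue \cite{Co65}. Your argument is essentially the standard proof from that cited literature. The upper bound via a nonzero vector $c$ annihilating the evaluation image $T_S(\mathcal{F})\subseteq\mathbb{R}^{n+1}$, with the affine shift $f_0$ producing the fixed constant $K=\sum_j c_j f_0(x_j)$ whose sign is incompatible with realizing both complementary patterns determined by $P=\{j:c_j>0\}$, is exactly the Radon-type argument used for Dudley classes; your sign bookkeeping (strict positivity from $P\neq\emptyset$ in one pattern, $K\leq 0$ in the other) correctly handles the degenerate cases where $N$ or $Z$ is empty, and the argument is insensitive to whether $f_0\in\mathcal{F}$. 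One observation worth making: your inductive cofactor-expansion lemma producing points with $\det\bigl(f_i(x_j)\bigr)\neq 0$ is precisely the same induction that the paper runs in Lemma \ref{L:kjh}, where it is strengthened to a topological statement (under admissibility, the zero set of that determinant has empty interior in $X^n$); the paper uses it to get \emph{generic} witnesses for Floyd's conditions, whereas you only need \emph{existence} of witnesses, which requires no hypotheses on $\mathcal{F}$ beyond linear independence. So your route is self-contained and slightly more elementary than what the paper assembles, while the paper's machinery buys the stronger genericity (dense, open, and in the analytic case co-null) conclusions of Theorem \ref{T:dskfjh} and Corollary \ref{C:dskfjh}.
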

  
Dudley classes include some natural set systems such as balls in Euclidean space, halfspaces in Euclidean space, and sets of positivity for polynomials, for which the coefficients are regarded as parameters. The following example is due to Dudley \cite{Du79}.
\\

{\bf{Example:}}  We will show that balls in Euclidean 2-space (disks) form a Dudley class. The scheme of the example can be generalized to higher dimensions. Define $f_0(x,y) = -x^2 -y^2$ and $f(x,y) = a_3y+a_2x+a_1$. Then $f_0 - f \in f_0 - \mathcal{F}$ where $\mathcal{F} = \{a_3y+a_2x+a_1:a_1,a_2,a_3 \in \mathbb{R}\}$. Note that $pos(f_0-f)$ describes a disk with center $(\frac{-a_2}{2}, \frac{-a_3}{2})$ and radius $\sqrt{(\frac{a_2}{2})^2+ (\frac{a_3}{2})^2-a_1}$.\footnote{When $(\frac{a_2}{2})^2+ (\frac{a_3}{2})^2-a_1 < 0$, the radius does not exist; in this case $f_0-f$ has no real solutions and $pos(f_0-f)$ describes the empty-set.  Including this degenerate case does not affect the VC dimension, because the empty-set can always be approximated by a sufficiently small disk.} Thus $Pos(f_0 - \mathcal{F})$ is the set system of all disks in the plane. Since it is also a $3$-dimensional Dudley class, we can conclude from Theorem \ref{TTTTT} that the VC dimension of the set of disks in the plane is 3. \qed
\\

The main link between maximum set systems and Dudley classes is due to Floyd \cite{Fl89} (Theorem 8.2). 

\begin{lemma}
Let $\mathcal{F}$ be a vector space of real-valued functions on a set $X$, with $dim(\mathcal{F})=n$.  Let $f_0(x)$ a function on $X$.  Suppose further that
\begin{enumerate}
\item For any $A \subseteq X$ with $|A| = n$, the dimension of $\mathcal{F}$ restricted to $A$ is $n$.
\item For any $f \in \mathcal{F}$, there are at most $n$ zeros of $f_0 - f$ in $X$.
\end{enumerate}
Then $pos(f_0 - \mathcal{F})$ is maximum of VC dimension $n$ on $X$.
\end{lemma}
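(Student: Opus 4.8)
The plan is to fix a finite $X_0 = \{x_1,\dots,x_m\} \subseteq X$ and show directly that $|Pos(f_0-\mathcal{F})\vert_{X_0}| = {m \choose \leq n}$; since $X_0$ is arbitrary, this is exactly the maximum property. The first step is to recast the problem geometrically. Let $E:\mathcal{F}\to\mathbb{R}^m$ be the evaluation map $E(f)=(f(x_1),\dots,f(x_m))$, let $V=E(\mathcal{F})$, and let $b=(f_0(x_1),\dots,f_0(x_m))$. A subset $S\subseteq X_0$ lies in $Pos(f_0-\mathcal{F})\vert_{X_0}$ precisely when $S=\{x_i : p_i>0\}$ for some point $p$ in the affine flat $b-V\subseteq\mathbb{R}^m$. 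Thus I must count the distinct ``positivity patterns'' realized on $b-V$ by the coordinate sign functions. By Theorem~\ref{TTTTT} the class has VC dimension $n$, so Sauer's lemma already supplies the upper bound $|Pos(f_0-\mathcal{F})\vert_{X_0}|\leq{m\choose\leq n}$; the work is in the matching lower bound, which I will obtain by an exact count.

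Next I would translate the two hypotheses into a statement of general position for the arrangement $\mathcal{H}=\{H_i\}_{i=1}^m$, where $H_i=\{p\in b-V : p_i=0\}$. When $m\leq n$, hypothesis (1) forces $E$ to be onto $\mathbb{R}^m$ (extend $X_0$ to $n$ points inside $X$ and project), so $V=\mathbb{R}^m$, every pattern occurs, and $X_0$ is shattered; this is the base case. When $m>n$, hypothesis (1) makes $E$ injective, so $b-V$ is an $n$-dimensional flat, and evaluation at any $k\leq n$ of the coordinates is surjective; consequently the intersection of any $k\leq n$ of the $H_i$ is a nonempty flat of codimension exactly $k$. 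Hypothesis (2) says no point of $b-V$ has more than $n$ zero coordinates, i.e.\ no $n+1$ of the $H_i$ meet. Together these are precisely the conditions that $\mathcal{H}$ is a general-position arrangement of $m$ hyperplanes in an $n$-dimensional space.

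The central step is to show that the realized positivity patterns correspond bijectively to the open regions (connected components) of $(b-V)\setminus\bigcup_i H_i$. On each open region every coordinate sign function is constant and nonzero, so the region determines a full-support sign vector, hence a positivity pattern; distinct regions give distinct patterns because the locus of a fixed full-support sign vector is an intersection of open halfspaces, hence convex and connected, hence a single region. The point needing care is that a \emph{boundary} point $p$, which has some zero coordinates, might in principle produce a pattern not seen on any open region. I would rule this out by perturbation: the zero set $Z=\{i : p_i=0\}$ has $|Z|\leq n$ by hypothesis (2), so by hypothesis (1) there is a vector $v\in V$ with $v_i<0$ for every $i\in Z$; then for small $\varepsilon>0$ the point $p+\varepsilon v$ lies in $b-V$, keeps the signs of all originally nonzero coordinates, and pushes every coordinate in $Z$ negative, so it sits in an open region and realizes the same positivity set as $p$. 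This is the step where hypothesis (1) is indispensable, and it is the main obstacle in the argument.

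Finally I would count the regions. For a general-position arrangement of $m$ hyperplanes in an $n$-dimensional flat, adding one hyperplane to an arrangement of $m-1$ others cuts it, via its inherited general-position arrangement of $m-1$ hyperplanes in dimension $n-1$, into $R(m-1,n-1)$ pieces, each of which splits one existing region in two; hence $R(m,n)=R(m-1,n)+R(m-1,n-1)$ with $R(0,n)=R(m,0)=1$. Using the identity ${m\choose\leq n}={m-1\choose\leq n}+{m-1\choose\leq n-1}$, induction gives $R(m,n)={m\choose\leq n}$. Combining with the previous step, $|Pos(f_0-\mathcal{F})\vert_{X_0}|={m\choose\leq n}$ for every finite $X_0$, which is the desired conclusion.
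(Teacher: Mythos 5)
Your proof is correct, but there is nothing in the paper to compare it against: the paper states this lemma as a quoted result, attributing it to Floyd's thesis (Theorem 8.2 of \cite{Fl89}), and offers no proof of its own. What you have written is, in substance, the classical Cover--Floyd counting argument, carried out in the ``primal'' rather than the ``dual'' picture. Your affine flat $b-V\subseteq\mathbb{R}^m$ cut by the coordinate hyperplanes $H_i$ is linearly isomorphic, via the parameterization $f\mapsto b-E(f)$ (whose injectivity for $m>n$ is exactly your hypothesis-(1) observation), to an arrangement of $m$ sample-indexed hyperplanes $\bigl\{a\in\mathbb{R}^n : f_0(x_i)-\sum_j a_j f_j(x_i)=0\bigr\}$ in parameter space, which is how Cover \cite{Co65} and Floyd set things up; your translations of hypotheses (1) and (2) are precisely the statement that this arrangement is simple (in general position), and your final recursion is Buck's formula for the number of regions of such an arrangement. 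Two small points deserve explicit mention in a final writeup. First, in the case $m\leq n$ you extend $X_0$ to a set of size $n$ inside $X$; this needs $|X|\geq n$, which is automatic because a vector space of real-valued functions on $X$ has dimension at most $|X|$, so $\dim(\mathcal{F})=n$ forces $|X|\geq n$. Second, in the region recursion, the claim that each cell of the newly added hyperplane splits exactly one old region into exactly two pieces rests on convexity of the regions (an old region is an intersection of open halfspaces, so it meets the new hyperplane in a convex, hence connected, set); you invoke convexity for injectivity of the region-to-pattern map, and you should invoke it here as well. A pleasant feature of your argument is the perturbation step for boundary points: it makes the count exact, so the Sauer/Dudley upper bound via Theorem \ref{TTTTT} that you mention at the outset is never actually needed.
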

The assumption (1) above is (by Dudley's theorem) equivalent to the requirement that $pos(f_0 - \mathcal{F})$ shatter every set of size $n$ in $X$, which is a necessary condition for the maximum property. This will be satisfied for an ordinary univariate polynomial $y=a_0 + a_1x + \cdots + a_nx^n$ if $X$ projects 1-1 onto the $x$-axis. 

The assumption (2) above requires that $f_0 \not \in \mathcal{F}$, since otherwise the space $f_0 - \mathcal{F}$ includes the constantly zero function.  If assumption (2) is not respected, Euclidean halfspaces provide a counter-example to the lemma, as observed in \cite{BL98}.

We now proceed to give arguably more natural criteria which guarantee that a linear system of real-valued functions satisfy Floyd's conditions.  For instance if $f_0,\ldots,f_n$ are linearly independent analytic functions and $X \subset \mathbb{R}^k$ is in general position, then with $\mathcal{F} = span \langle f_1,\ldots,f_n \rangle$, Floyd's lemma applies to $f_0 - \mathcal{F}$.

This gives examples of maximum families which have not been given in the literature to date. Some examples are given at the end of the next section.

\section{Results}
In this section we will introduce topological and analytic conditions on $\mathcal{F}$ and $X$ which are sufficient to guarantee that $Pos(f_0 - \mathcal{F})$ is a maximum set system when restricted to subsets of $X$ which are in general position (Theorem \ref{T:dskfjh}). 
The basic strategy for proving Theorem \ref{T:dskfjh} is to associate subsets of $X$ of size $N$ with elements of $X^N$, and observe that the elements not satisfying Floyd's criteria constitute a thin part of $X^N$. 

The elements of $X^N$ on which Floyd's conditions fail will be seen to lie on the zero sets of certain functions arising as determinants of matrices. Establishing that these determinants do indeed have thin zero sets is the aim of Lemma \ref{L:kjh}.

 When $\mathcal{F}$ consists of analytic functions (which are defined before Proposition \ref{P:kdfh}) the elements of $X^N$ not satisfying Floyd's criteria will actually have Lebesgue measure zero.  This implies that if a finite $X_0 \subseteq X$ is selected according to one of several common probability distributions, including the uniform and Gaussian distributions, then $Pos(f_0 - \mathcal{F})$ will almost surely be maximum when restricted to $X_0$ (see Corollary \ref{C:dskfjh}).

\subsection{}

Let $\mathcal{F}$ be an $n$-dimensional vector space of real-valued functions on a topological space $X$.  We will say that $\mathcal{F}$ is \textit{admissible} if for any $f\in \mathcal{F}$, 

\begin{enumerate}

\item $f$ is continuous
\item If $f^{-1}(0)$ has non-empty interior, then $f$ is constantly zero.  
\end{enumerate}
Note that any subspace of an admissible $\mathcal{F}$ is admissible.

Equip $X^n = \underbrace{X \times X \times \cdots \times X}_{n\, \text{times}}$ with the product topology.
\begin{lemma}\label{L:kjh}
Suppose $\mathcal{F}$ is admissible and $f_1(x),\ldots,f_n(x)$ is a basis for $\mathcal{F}$. Let $F:X^n \to \mathbb{R}$ be given by

\[F(x_1,\ldots,x_n) = det
 \begin{pmatrix}
  f_{1}({x}_1) & f_{2}({x}_1) & \cdots & f_{n}({x}_1) \\
  f_{1}({x}_2) & f_{2}({x}_2) & \cdots & f_{n}({x}_2) \\
  \vdots  & \vdots  & \ddots & \vdots  \\
  f_{1}({x}_n) & f_{2}({x}_n) & \cdots & f_{n}({x}_n)
 \end{pmatrix}
\]
then $F^{-1}(0) \subseteq X^n$ has empty interior.
\end{lemma}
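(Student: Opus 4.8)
The plan is to argue by induction on $n = \dim(\mathcal{F})$, using admissibility to convert the hypothesis ``$F$ vanishes on an open set'' into the statement that certain functions in $\mathcal{F}$ are identically zero. For the base case $n=1$ we have $F(x_1) = f_1(x_1)$, where $f_1$ spans $\mathcal{F}$ and hence is not the zero function. If $F^{-1}(0)$ had non-empty interior, then $f_1^{-1}(0)$ would have non-empty interior, and admissibility (condition 2) would force $f_1 \equiv 0$, a contradiction.

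For the inductive step I would assume the lemma for all admissible spaces of dimension $n-1$ and suppose, toward a contradiction, that $F^{-1}(0)$ has non-empty interior. By the definition of the product topology, $F^{-1}(0)$ then contains a non-empty basic open box $U_1 \times \cdots \times U_n$ with each $U_i \subseteq X$ open and non-empty. Expanding the determinant along the last row yields
\[
F(x_1,\ldots,x_n) = \sum_{j=1}^{n} (-1)^{n+j} f_j(x_n)\, M_j(x_1,\ldots,x_{n-1}),
\]
where $M_j$ is the minor obtained by deleting the last row and the $j$-th column. Now fix any $(x_1,\ldots,x_{n-1}) \in U_1 \times \cdots \times U_{n-1}$. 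Viewed as a function of the single remaining variable $x_n$, the right-hand side is the element $g = \sum_{j=1}^{n} (-1)^{n+j} M_j(x_1,\ldots,x_{n-1}) f_j$ of $\mathcal{F}$. Since $F$ vanishes identically on the box, $g$ vanishes on the open set $U_n$, so $g^{-1}(0)$ has non-empty interior and admissibility forces $g \equiv 0$. Because $f_1,\ldots,f_n$ are linearly independent, all of its coefficients vanish; in particular $M_n(x_1,\ldots,x_{n-1}) = 0$.

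Since $(x_1,\ldots,x_{n-1})$ was an arbitrary point of the box, the function $M_n$ vanishes on the non-empty open set $U_1 \times \cdots \times U_{n-1} \subseteq X^{n-1}$. The key observation is that $M_n$ is precisely the determinant function of the present lemma associated to the basis $f_1,\ldots,f_{n-1}$ of the subspace $\mathcal{F}' = span\langle f_1,\ldots,f_{n-1}\rangle$. As noted in the definition of admissibility, any subspace of an admissible space is admissible, so $\mathcal{F}'$ is an admissible space of dimension $n-1$, and $f_1,\ldots,f_{n-1}$ remain linearly independent. The inductive hypothesis then gives that $M_n^{-1}(0)$ has empty interior in $X^{n-1}$, contradicting the fact that it contains the non-empty open box $U_1 \times \cdots \times U_{n-1}$. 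This completes the induction and the proof.

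The step I expect to be the main obstacle is the reduction in the inductive step, namely verifying that the coefficient extracted, the principal minor $M_n$, is genuinely the determinant function for a lower-dimensional \emph{admissible} subspace equipped with a valid basis, so that the inductive hypothesis truly applies; the heart of the matter is the passage from ``$F$ vanishes on a box'' to ``an element of $\mathcal{F}$ vanishes on a set with non-empty interior,'' where both admissibility (to kill $g$) and linear independence of the basis (to conclude that each coefficient, and in particular $M_n$, vanishes) are essential. Everything else is the routine cofactor expansion.
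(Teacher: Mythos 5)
Your proof is correct and follows essentially the same strategy as the paper's: induction on $n$ via cofactor expansion of the determinant, admissibility to promote vanishing on an open set to identical vanishing, and linear independence of the basis, with the inductive hypothesis applied to the admissible $(n-1)$-dimensional subspace spanned by a subset of the basis. The only difference is the order of deployment --- the paper first invokes the inductive hypothesis to find a point of the projection of $U$ where the complementary minor is nonzero and then gets a contradiction from admissibility on the resulting slice, whereas you first apply admissibility to every slice to force all minors to vanish on the box and then contradict the inductive hypothesis --- a harmless rearrangement of the same two ingredients.
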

\begin{proof}
The argument is by induction on $n$.  If $n=1$, the lemma follows from the assumptions on $\mathcal{F}$.

Suppose the lemma is known to hold for $n-1$ and consider $F(x_1,\ldots,x_n)$.  Then

\[F = f_1(x_1) \begin{vmatrix}
  f_{2}({x}_2) & \cdots & f_{n}({x}_2) \\
  \vdots  & \ddots & \vdots  \\
  f_{2}({x}_n) & \cdots & f_{n}({x}_n)
 \end{vmatrix} + \cdots + f_n(x_1)(-1)^{1+n} \begin{vmatrix}
  f_{1}({x}_2) & \cdots & f_{n-1}({x}_2) \\
  \vdots  & \ddots & \vdots  \\
  f_{1}({x}_n) & \cdots & f_{n-1}({x}_n)
 \end{vmatrix}
\]
where the vertical bars denote the determinant.

Suppose $U \subset F^{-1}(0)$ is open.  Assume, by way of contradiction, that $U$ is nonempty.  Let $V$ be the projection of $U$ onto $x_2,\ldots,x_n$.  By inductive hypothesis, there is some $(a_2,\ldots,a_n) \in V$ such that 

\[
\begin{vmatrix}
  f_{2}({a}_2) & \cdots & f_{n}({a}_2) \\
  \vdots  & \ddots & \vdots  \\
  f_{2}({a}_n) & \cdots & f_{n}({a}_n)
 \end{vmatrix} \neq 0
 \]
 This gives
 $$F(x_1,a_2,\ldots,a_n) = c_1 f_1(x_1) + \cdots + c_n f_n(x_1)$$ 
 for real numbers $c_1,\ldots,c_n$, corresponding to the subdeterminants, and $c_1 \neq 0$.  Define $U_{a_2,\ldots,a_n} = \{a \in X : (a,a_2,\ldots,a_n) \in U\}$.  Then $U_{a_2,\ldots,a_n}$ is non-empty and open, and on this open set $F(x_1,a_2,\ldots,a_n) = 0$.  But $F(x_1,a_2,\ldots,a_n) \in \mathcal{F}$, and therefore $F(x_1,a_2,\ldots,a_n) = 0$ everywhere.  This contradicts the linear independence of $f_1,\ldots,f_n$, because $c_1 \neq 0$.  Thus $F^{-1}(0)$ has empty interior.
 
\end{proof}

Proposition \ref{P:kdfh}, below, appears in \cite{Fe96} on p. 240.  The statement given there is for the more general context of Banach spaces.  The proof uses the technique of approximate differentiation; we will give a more elementary argument.

Recall that an infinitely differentiable function $f: \mathbb{R}^n \rightarrow \mathbb{R}$ is \textit{analytic} if for every $x$ in the domain of $f$ there is an open set $U$ with $x\in U$ such that $f$ is equal to its Taylor series expansion on $U$.  
We will use the fact that if $f: \mathbb{R} \rightarrow \mathbb{R}$ is analytic and takes non-zero values, then its zeros form a countable set \cite{KP02}.

\begin{proposition} \label{P:kdfh}
Let $f:\mathbb{R}^k \rightarrow \mathbb{R}$ be analytic.  Suppose $f$ is not constantly zero and let $A= f^{-1}(0)$. Then $\lambda(A)=0$ where $\lambda$ is Lebesgue measure.
\end{proposition}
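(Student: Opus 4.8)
The plan is to argue by induction on the dimension $k$, peeling off one coordinate at a time and invoking Fubini's (Tonelli's) theorem, with the stated one-variable fact serving as the base case. Indeed, the case $k=1$ is immediate: a nonzero real analytic function of a single variable has a countable zero set, and countable sets are Lebesgue null.

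For the inductive step I would suppose the result holds in dimension $k-1$ and let $f:\mathbb{R}^k\to\mathbb{R}$ be analytic and not identically zero. Splitting coordinates as $(y,t)\in\mathbb{R}^{k-1}\times\mathbb{R}$, write the slice $A_y=\{t\in\mathbb{R}:f(y,t)=0\}$. Since $f$ is continuous, $A=f^{-1}(0)$ is closed, hence measurable, and Tonelli's theorem gives
\[
\lambda_k(A)=\int_{\mathbb{R}^{k-1}}\lambda_1(A_y)\,dy.
\]
Let $B=\{y\in\mathbb{R}^{k-1}: f(y,\cdot)\equiv 0\}$ be the set of \emph{degenerate} slices. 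For $y\notin B$ the map $t\mapsto f(y,t)$ is a nonzero analytic function of one variable, so $A_y$ is countable and $\lambda_1(A_y)=0$. Thus the integrand vanishes off $B$, and it will follow that $\lambda_k(A)=0$ as soon as we establish $\lambda_{k-1}(B)=0$, since then the integrand is $0$ for almost every $y$.

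The crux of the argument---and the step I expect to require the most care---is controlling the degenerate set $B$. The key observation is that, because $\mathbb{R}$ is connected, a one-variable real analytic function vanishes identically exactly when all of its derivatives vanish at a single point (the identity theorem for real analytic functions). Applying this in the $t$-variable at $t=0$ yields $B=\bigcap_{m\geq 0} g_m^{-1}(0)$, where $g_m(y)=\tfrac{\partial^m f}{\partial t^m}(y,0)$. Each $g_m$ is analytic on $\mathbb{R}^{k-1}$, being a partial derivative of $f$ restricted to the hyperplane $t=0$. If every $g_m$ were identically zero then every slice would be degenerate and $f$ would be constantly zero, contradicting the hypothesis; hence some $g_{m_0}$ is not identically zero. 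Then $B\subseteq g_{m_0}^{-1}(0)$, and the inductive hypothesis applied to the analytic function $g_{m_0}$ on $\mathbb{R}^{k-1}$ gives $\lambda_{k-1}(g_{m_0}^{-1}(0))=0$, whence $\lambda_{k-1}(B)=0$. This closes the induction and yields $\lambda_k(A)=0$.
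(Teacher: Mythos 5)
Your proof is correct, and it follows the same overall strategy as the paper's: induction on $k$, the countability of the zero set of a nonzero one-variable analytic function as the base case, and Fubini/Tonelli to reduce the $k$-dimensional statement to one-dimensional slices. The difference lies in completeness at the crucial step. The paper truncates $A$ to balls and invokes monotone convergence so that Fubini (for integrable functions) applies, and then asserts that ``from iterated integration and induction it is seen'' that $\int \chi_A = 0$; that sentence hides exactly the difficulty you isolate, namely that a slice $A_y$ can have positive (indeed infinite) one-dimensional measure when $f(y,\cdot)\equiv 0$, so one must show that the set $B$ of degenerate slices is $\lambda_{k-1}$-null before the iterated integral can be seen to vanish. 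Your argument supplies this missing piece: writing $B=\bigcap_{m\geq 0}g_m^{-1}(0)$ with $g_m(y)=\partial_t^m f(y,0)$, noting that each $g_m$ is analytic on $\mathbb{R}^{k-1}$, that not all of them can vanish identically (else $f\equiv 0$, by the identity theorem applied slice-wise), and then applying the inductive hypothesis in dimension $k-1$ to a nonvanishing $g_{m_0}$ to conclude $\lambda_{k-1}(B)=0$. Your use of Tonelli for the nonnegative function $\chi_A$ also removes the need for the paper's truncation-and-monotone-convergence detour, since no integrability hypothesis is required. In short: same skeleton, but your version closes the gap that the paper's proof leaves implicit, and is the argument one would want on record.
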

\begin{proof}
 The proof is by induction on $k$.  Suppose $k=1$.  Then $A$ is countable and therefore $\lambda(A)= 0$.  If $k>1$ then let $\chi_A$ be the indicator function for $A$. 
 


Define $A_r = A \cap B_r$ where $B_r$ is a $k$-ball of radius $r$ centered at the origin. Then the functions $\{\chi_{A_n} : n \in \mathbb{N}\}$ converge monotonically to $\chi_A$.  By the monotone convergence theorem \cite{Ro88}, 
$\lim_{n \to \infty} \int \chi_{A_n} = \int \chi_A$.  Thus it suffices to show that $\int \chi_{A_n} = 0$ for all $n$. By replacing $A$ with $A_n$ if necessary, we may assume without loss that $\lambda(A) < \infty$.

Since $\chi_A$ takes only non-negative values, $\int_{\mathbb{R}^k} |\chi_A| d(x_1,\ldots,,d_k) = \int_{\mathbb{R}^k} \chi_A d(x_1,\ldots,,d_k) = \lambda(A) < \infty$.  Thus $\chi_A$ satisfies the conditions of Fubini's theorem \cite{Fe96}.  By Fubini's theorem  we may evaluate $\int_{\mathbb{R}^k} \chi_A \,d(x_1,\ldots,x_k)$ by iterated integration. 
  From iterated integration and induction it is seen that $\int_{\mathbb{R}^k} \chi_A \,d(x_1,\ldots,x_k) = 0$.  This implies that $\lambda(A) = 0$.
 \end{proof}

\begin{corollary}\label{P:dkjh}
 If $\mathcal{F}$ is a real vector space of real-valued functions defined on  $\mathbb{R}^k$ and $\mathcal{F}$ has a basis consisting of real analytic functions then $\mathcal{F}$ is admissible. 
\end{corollary}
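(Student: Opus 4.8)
The plan is to verify directly the two defining conditions of admissibility for an arbitrary element $f \in \mathcal{F}$. Fix a basis $g_1,\ldots,g_n$ of $\mathcal{F}$ consisting of real analytic functions, and write $f = c_1 g_1 + \cdots + c_n g_n$ for suitable reals $c_1,\ldots,c_n$.

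For condition (1), the observation is that a finite linear combination of analytic functions is again analytic, hence continuous. Concretely, near any point $x_0 \in \mathbb{R}^k$ each $g_i$ agrees with its Taylor series on some open neighborhood $U_i$ of $x_0$; on the open set $\bigcap_i U_i$ the function $f$ agrees with the corresponding linear combination of those series, which is precisely its own Taylor expansion at $x_0$. Thus $f$ is analytic, and in particular continuous, which establishes the first condition for every $f \in \mathcal{F}$.

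For condition (2), I would argue by contraposition using Proposition \ref{P:kdfh}. Suppose $f^{-1}(0)$ has nonempty interior. Then it contains an open ball $B$, and since any nonempty open subset of $\mathbb{R}^k$ has positive Lebesgue measure, we have $\lambda(f^{-1}(0)) \geq \lambda(B) > 0$. Because $f$ is analytic by the previous step, Proposition \ref{P:kdfh} applies: any analytic function whose zero set has positive measure must be constantly zero. Hence $f$ is identically zero, which is exactly what condition (2) demands.

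The argument is short precisely because the substantive analytic content is carried by Proposition \ref{P:kdfh}; the only point requiring any care is the closure of the analytic functions under linear combination, which is what lets us invoke the proposition for every $f \in \mathcal{F}$ rather than merely for basis elements. I expect no genuine obstacle here, since the term-by-term addition of Taylor series on a common domain of convergence is routine; the value of the corollary lies in packaging the measure-theoretic fact as the purely topological admissibility hypothesis needed by Lemma \ref{L:kjh}.
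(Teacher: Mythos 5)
Your proof is correct and takes essentially the same route as the paper: the paper's one-line proof, ``Lebesgue measure zero implies empty interior,'' is precisely your contrapositive argument --- combining Proposition \ref{P:kdfh} with the fact that nonempty open sets have positive measure --- with the closure of analytic functions under linear combination left implicit. You have merely spelled out the details that the paper compresses.
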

\begin{proof}
Lebesgue measure zero implies empty interior.
\end{proof}

\begin{corollary} \label{C:fjkh}
Suppose that $\mathcal{F}$ is a real vector space of real-valued functions defined on $\mathbb{R}^k$ for some $k \in \mathbb{N}$ and $\mathcal{F}$ has a basis of real analytic functions. Then $F^{-1}(0)$ has Lebesgue measure zero, where $F$ is as in Lemma \ref{L:kjh}.
\end{corollary}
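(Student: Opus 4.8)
The plan is to observe that $F$ is itself analytic as a function on $\mathbb{R}^{kn}$, so that Proposition \ref{P:kdfh} applies to $F$ directly, once we know $F$ is not identically zero. In other words, the corollary should follow by combining Lemma \ref{L:kjh} (to rule out the degenerate case) with Proposition \ref{P:kdfh} (to upgrade ``empty interior'' to ``measure zero'').

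First I would verify that $F:\mathbb{R}^{kn}\to\mathbb{R}$ is analytic. Write $\pi_j:\mathbb{R}^{kn}\to\mathbb{R}^k$ for the projection onto the $j$-th block of coordinates, so that $\pi_j(x_1,\ldots,x_n)=x_j$. Each $\pi_j$ is analytic, and each basis element $f_i$ is analytic on $\mathbb{R}^k$ by hypothesis, so every composite $f_i\circ\pi_j$ — which is precisely the $(j,i)$ entry of the matrix defining $F$ — is analytic on $\mathbb{R}^{kn}$. Since the determinant is a fixed polynomial in its matrix entries, i.e.\ a finite sum of products of the functions $f_i\circ\pi_j$, and since finite sums and products of analytic functions are again analytic, $F$ is analytic on $\mathbb{R}^{kn}$.

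Next I would rule out the possibility that $F$ is constantly zero. By Corollary \ref{P:dkjh}, the hypothesis that $\mathcal{F}$ has a basis of real analytic functions guarantees that $\mathcal{F}$ is admissible, so Lemma \ref{L:kjh} applies and tells us that $F^{-1}(0)$ has empty interior. If $F$ were identically zero we would have $F^{-1}(0)=\mathbb{R}^{kn}$, whose interior is all of $\mathbb{R}^{kn}$ and hence non-empty — a contradiction. Therefore $F$ is not constantly zero, and applying Proposition \ref{P:kdfh} with ambient dimension $kn$ gives $\lambda(F^{-1}(0))=0$, as claimed.

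The only step requiring genuine care is the first one: confirming that analyticity of the $f_i$ on $\mathbb{R}^k$ transfers, via composition with the projections and via the polynomial structure of the determinant, to analyticity of $F$ on the larger space $\mathbb{R}^{kn}$. Everything after that is an immediate combination of the two cited results, so I do not anticipate a substantive obstacle beyond recording that analyticity is preserved under these operations.
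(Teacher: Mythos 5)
Your proposal is correct and follows essentially the same route as the paper: the paper's proof simply notes that $F$ is analytic and not constantly zero and then applies Proposition \ref{P:kdfh}, exactly as you do, with your write-up supplying the details (analyticity of $F$ via the determinant's polynomial structure, and non-vanishing via Corollary \ref{P:dkjh} and Lemma \ref{L:kjh}) that the paper leaves implicit.
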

\begin{proof}
Note that $F$ is analytic and not constantly zero, and therefore Proposition \ref{P:kdfh} applies. 
\end{proof}

We will call a non-empty topological space $X$ a \textit{Baire space} if any countable union of closed sets with empty interior has empty interior.  For natural numbers $n$ and $N$, by $[N]^n$ we mean the subsets of $\{1,\ldots,N\}$ of cardinality $n$.  We will abuse notation slightly by writing $\langle i_1,\ldots,i_n \rangle \in [N]^n$ to mean that $\{i_1,\ldots,i_n\} \in [N]^n$ and $i_1<\cdots <i_n$. For an ordered set $q \in X^N$, we regard $q$ as the function with domain $[N]$ and codomain $X$ defined by $i \mapsto q_i$.  Thus by $range(q)$ we mean the elements of $X$ occurring in the ordered set $q$.

\begin{theorem}\label{T:dskfjh}
Suppose $f_0,f_1,\ldots,f_n$ are linearly independent real-valued functions defined on an infinite topological space $X$ with the property that for every $N \in \mathbb{N}$, $X^N$ is Baire in the product topology. Put $\mathcal{F} = span \langle f_0,f_1,\ldots,f_n \rangle$ and $\mathcal{C} = pos(f_0 - span\langle f_1,\ldots,f_n\rangle )$.  Then if $\mathcal{F}$ is admissible then for every $N > n$ there is $X_0 \subseteq X$ with $|X_0|=N$ such that $\mathcal{C} \vert_{X_0}$ is maximum of VC dimension $n$.
\end{theorem}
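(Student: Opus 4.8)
The plan is to apply Floyd's lemma with the finite set $X_0$ as the ground space, so that the whole problem reduces to locating a single $N$-tuple $q \in X^N$ whose range satisfies both of Floyd's hypotheses. First I would record what those hypotheses mean determinantally. Writing $F$ for the determinant function of Lemma \ref{L:kjh} associated to the basis $f_1,\ldots,f_n$ of $span\langle f_1,\ldots,f_n\rangle$, Floyd's condition (1) on a set $A = \{a_1,\ldots,a_n\}$ is precisely $F(a_1,\ldots,a_n) \neq 0$. For condition (2), let $G:X^{n+1} \to \mathbb{R}$ be the analogous determinant for the full $(n+1)$-dimensional space $\mathcal{F}$ with basis $f_0,f_1,\ldots,f_n$. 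The key observation is that $G(y_0,\ldots,y_n) \neq 0$ exactly when the evaluation map $\mathcal{F} \to \mathbb{R}^{n+1}$, $g \mapsto (g(y_0),\ldots,g(y_n))$, is an isomorphism, i.e. when no nonzero element of $\mathcal{F}$ vanishes at all of $y_0,\ldots,y_n$. Since every $f_0 - f$ with $f \in span\langle f_1,\ldots,f_n\rangle$ is a nonzero member of $\mathcal{F}$ (its $f_0$-coefficient is $1$), the nonvanishing of $G$ on each $(n+1)$-subset of $X_0$ forces each such $f_0 - f$ to have at most $n$ zeros in $X_0$, which is condition (2).

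Next I would transfer these pointwise conditions to a genericity statement on $X^N$. For each $\langle i_1,\ldots,i_n\rangle \in [N]^n$ set $F_{\langle i_1,\ldots,i_n\rangle} = F \circ \pi$, where $\pi:X^N \to X^n$ projects onto the coordinates $i_1,\ldots,i_n$, and similarly define $G_{\langle i_0,\ldots,i_n\rangle} = G \circ \pi$ for each $\langle i_0,\ldots,i_n\rangle \in [N]^{n+1}$. These are continuous, so their zero sets are closed, and each is the $\pi$-preimage of $F^{-1}(0)$ or $G^{-1}(0)$. Coordinate projections in the product topology are open surjections, and the preimage of an empty-interior set under an open surjection again has empty interior; so by Lemma \ref{L:kjh} (applied to the admissible space $span\langle f_1,\ldots,f_n\rangle$ for $F$, and to the admissible space $\mathcal{F}$ for $G$) every one of these finitely many zero sets is closed with empty interior. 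Because $X^N$ is Baire, their union has empty interior, and since $X^N$ is nonempty this union is a proper subset; any $q$ in its complement satisfies $F_{\langle\cdots\rangle}(q) \neq 0$ and $G_{\langle\cdots\rangle}(q) \neq 0$ for every relevant index tuple.

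It then remains to read off the conclusion from such a $q$. First, the coordinates of $q$ are pairwise distinct: any two indices lie in a common $(n+1)$-subset (as $N > n$), and a repeated coordinate would produce two equal rows in the corresponding $G$-matrix, contradicting $G_{\langle\cdots\rangle}(q) \neq 0$; hence $X_0 = range(q)$ has exactly $N$ elements. Every $n$-subset of $X_0$ is $\{q_{i_1},\ldots,q_{i_n}\}$ for some tuple with $F_{\langle\cdots\rangle}(q) \neq 0$, giving Floyd's condition (1) on $X_0$, and every $(n+1)$-subset arises from a tuple with $G_{\langle\cdots\rangle}(q) \neq 0$, giving condition (2) by the reformulation above. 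Floyd's lemma, applied with ground set $X_0$, then yields that $\mathcal{C}\vert_{X_0} = pos(f_0 - span\langle f_1,\ldots,f_n\rangle)\vert_{X_0}$ is maximum of VC dimension $n$.

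I would flag two places as the crux. The conceptually delicate step is the determinantal reformulation of condition (2): seeing that forbidding $n+1$ common zeros of $f_0 - f$ is equivalent to the nonvanishing of the full $(n+1)\times(n+1)$ determinant $G$ built from $\mathcal{F}$, which is what lets Lemma \ref{L:kjh} do the work on the larger space. The second point requiring care is the topological bookkeeping, namely verifying that empty interior is preserved under the open projections $\pi$ and then invoking the Baire property to realize the finitely many generic conditions simultaneously; this is precisely where the hypothesis that every $X^N$ is Baire is essential.
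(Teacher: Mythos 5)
Your proposal is correct and follows essentially the same route as the paper: reformulate Floyd's two conditions as nonvanishing of the determinants $F_B$ and $G_B$ (the latter built from the basis $f_0,f_1,\ldots,f_n$ of the admissible space $\mathcal{F}$), use Lemma \ref{L:kjh} plus openness of coordinate projections to see each zero set is closed with empty interior in $X^N$, and invoke the Baire property to find a tuple $q$ avoiding all of them. Your phrasing of condition (2) via the evaluation map $g \mapsto (g(y_0),\ldots,g(y_n))$ being an isomorphism is just a repackaging of the paper's inconsistent-linear-system argument, and your explicit check that the coordinates of $q$ are pairwise distinct (so that $|X_0|=N$) is a detail the paper leaves implicit; it is a welcome addition.
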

\begin{proof}
Let $N \in \mathbb{N}$ be given.  Let $x_1,\ldots,x_N$ be variables ranging over $X$.  We can express the statement that $x_1,\ldots,x_N$ satisfy conditions (1) and (2) of Floyd's lemma using determinants. 
For any $B = \langle i_1,\ldots,i_n \rangle \in [N]^n$, let $F_B$ denote the function 
 
\[
F_B(x_{1},\ldots,x_{N}) = det
 \begin{pmatrix}
  f_{1}({x}_{i_1}) & f_{2}({x}_{i_1}) & \cdots & f_{n}({x}_{i_1}) \\
  f_{1}({x}_{i_2}) & f_{2}({x}_{i_2}) & \cdots & f_{n}({x}_{i_2}) \\
  \vdots  & \vdots  & \ddots & \vdots  \\
  f_{1}({x}_{i_n}) & f_{2}({x}_{i_n}) & \cdots & f_{n}({x}_{i_n})
 \end{pmatrix}.
\]
  Note that $F_B$ ignores variables not in $B$.
  
   Condition (1) will be true if for every $B \in [N]^n$, $F_B(x_1,\ldots,x_N) \neq 0$.    Note that for each choice of $B \in [N]^n$, $F_B^{-1}(0)$ has empty interior as a subset of $X^N$, as a consequence of Lemma \ref{L:kjh}.

Condition (2) of Floyd's lemma will be satisfied if the system given by

\[
 \begin{pmatrix}
  f_{1}({x}_{i_1}) & f_{2}({x}_{i_1}) & \cdots & f_{n}({x}_{i_1}) \\
  f_{1}({x}_{i_2}) & f_{2}({x}_{i_2}) & \cdots & f_{n}({x}_{i_2}) \\
  \vdots  & \vdots  & \ddots & \vdots  \\
  f_{1}({x}_{i_{n+1}}) & f_{2}({x}_{i_{n+1}}) & \cdots & f_{n}({x}_{i_{n+1}})
 \end{pmatrix}
\begin{pmatrix}
 w_1 \\ w_2 \\ \vdots \\ w_n
\end{pmatrix}
=
\begin{pmatrix}
 f_0({x}_{i_1}) \\ f_0({x}_{i_2}) \\ \vdots \\ f_0({x}_{i_{n+1}})
\end{pmatrix}
\]
is inconsistent for every choice of $x_{i_1},\ldots,x_{i_{n+1}}$ from $x_1,\ldots,x_N$.
If we define, for every $B = \langle i_1,\ldots,i_{n+1} \rangle \in [N]^{n+1}$, 
\[
G_B(x_{1},\ldots,x_{N}) = det
 \begin{pmatrix}
  f_{1}({x}_{i_1}) & f_{2}({x}_{i_1}) & \cdots & f_{n}({x}_{i_1}) & f_0({x}_{i_1}) \\
  f_{1}({x}_{i_2}) & f_{2}({x}_{i_2}) & \cdots & f_{n}({x}_{i_2}) & f_0({x}_{i_2}) \\
  \vdots  & \vdots  & \ddots & \vdots & \vdots \\
  f_{1}({x}_{i_{n+1}}) & f_{2}({x}_{i_{n+1}}) & \cdots & f_{n}({x}_{i_{n+1}}) &f_0({x}_{i_{n+1}})
 \end{pmatrix}
\]
then condition (2) is equivalent to the requirement that $G_B(x_1,\ldots,x_N) \neq 0$ for all $B \in [N]^{n+1}$.  Note that for each choice of $B \in [N]^{n+1}$, $G_B^{-1}(0)$ has empty interior as a subset of $X^N$, as a consequence of Lemma \ref{L:kjh}.

To complete the argument, we must show that

$$ Q := X^N \setminus \left ( \bigcup_{B \in [N]^n} F_B^{-1}(0) \cup \bigcup_{B \in [N]^{n+1}} G_B^{-1}(0) \right ) $$
is nonempty.  But since $X^N$ is Baire, $Q$ is actually open and dense.  Taking $q \in Q \subseteq X^N$, we see that $X_0 := range(q)$ suffices, by Floyd's lemma.
\end{proof}

Note that in the special case in which $f_0,\ldots,f_n$ are real analytic functions defined on $\mathbb{R}^k$, the set $Q$ as in the proof of the theorem is not only dense and open, but co-null in the sense of Lebesgue measure by Corollary \ref{C:fjkh}.  This gives applications to probability distributions which have the same null sets as Lebesgue measure. Recall that a measure $\nu$ defined on the Borel sets is \textit{absolutely continuous} with respect to $\lambda$ if $\lambda(B) =0$ always implies $\nu(B) = 0$.  It is known that the Gaussian measures are absolutely continuous with respect to Lebesgue measure \cite{KS06}.  The same is true for the uniform probability measure defined on a box in Euclidean space, since this is just Lebesgue measure normalized to a bounded set.  

\begin{corollary} \label{C:dskfjh}
Suppose $f_0,f_1,\ldots,f_n$ are linearly independent real analytic functions defined on $$R = \underbrace{[0,1] \times [0,1] \times \cdots \times [0,1]}_{k\, \text{times}}.$$ Put $\mathcal{C} = pos(f_0 - span\langle f_1,\ldots,f_n\rangle )$. For $N > n$, let $\nu$ be a probability measure on $R^N$ which is absolutely continuous with respect to Lebesgue measure.  Then if $q \in R^N$ is selected at random according to $\nu$, then $\mathcal{C} \vert_{X_0}$ is maximum of VC dimension $n$ with probability 1, where $X_0 = range(q)$.
\end{corollary}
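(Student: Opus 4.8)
The plan is to upgrade the ``empty interior'' estimates used in the proof of Theorem~\ref{T:dskfjh} to ``Lebesgue measure zero'' estimates, and then to transfer a Lebesgue-null conclusion to $\nu$ via absolute continuity. First I would fix $N > n$ and recall from that proof the two finite families of functions $F_B$ (for $B \in [N]^n$) and $G_B$ (for $B \in [N]^{n+1}$), together with the good set $Q = R^N \setminus \left( \bigcup_{B \in [N]^n} F_B^{-1}(0) \cup \bigcup_{B \in [N]^{n+1}} G_B^{-1}(0) \right)$. Since $f_0,\ldots,f_n$ are real analytic, $\mathcal{F} = \mathrm{span}\langle f_0,\ldots,f_n\rangle$ is admissible by Corollary~\ref{P:dkjh}, so the hypotheses and lemmas feeding Theorem~\ref{T:dskfjh} all apply here; in particular $R^N = [0,1]^{kN}$ is compact metric, hence Baire. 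Lemma~\ref{L:kjh} then guarantees that each $F_B$ and each $G_B$ has zero set of empty interior, so none of them is the identically-zero function.

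Next I would argue that each $F_B^{-1}(0)$ and $G_B^{-1}(0)$ is Lebesgue-null in $R^N \subseteq \mathbb{R}^{kN}$. Each $F_B$ and $G_B$ is a polynomial in the analytic quantities $f_j(x_i)$, hence is itself analytic as a function of the $kN$ real coordinates, and by the previous paragraph it is not identically zero. Corollary~\ref{C:fjkh} (equivalently Proposition~\ref{P:kdfh}) therefore shows that each such zero set has Lebesgue measure zero. Because $[N]^n$ and $[N]^{n+1}$ are finite index sets, the complement $R^N \setminus Q$ is a \emph{finite} union of null sets and is therefore null, so $Q$ is co-null in $R^N$.

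Finally, since $\nu \ll \lambda$ and $\lambda(R^N \setminus Q) = 0$, we obtain $\nu(R^N \setminus Q) = 0$, i.e. $\nu(Q) = 1$. For any $q \in Q$ the argument of Theorem~\ref{T:dskfjh} applies verbatim: the non-vanishing of every $F_B$ and $G_B$ at $q$ is exactly conditions (1) and (2) of Floyd's lemma for $\mathrm{range}(q)$, and forces the coordinates of $q$ to be distinct so that $|\mathrm{range}(q)| = N$. Hence $\mathcal{C}\vert_{\mathrm{range}(q)}$ is maximum of VC dimension $n$, and a $\nu$-random $q$ yields such a maximum restriction with probability one.

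The genuine content lies entirely in the measure-zero estimate, which Corollary~\ref{C:fjkh} already supplies, so I do not expect a substantive obstacle; the one point requiring care is that the $f_i$ are given on the closed box $R = [0,1]^k$ rather than on all of $\mathbb{R}^k$, where Proposition~\ref{P:kdfh} was stated. I would dispose of this by regarding each $f_i$ as the restriction of a function analytic on a connected open neighborhood $O \supseteq R$ (equivalently, by observing that the interior of $R$ is connected and open, so the identity theorem forces any $f \in \mathcal{F}$ that vanishes on a subset of $R$ with nonempty interior to vanish identically). The null sets computed in $\mathbb{R}^{kN}$ by Proposition~\ref{P:kdfh} then restrict to null subsets of $R^N$, and ``not identically zero on $R$'' coincides with ``not identically zero on $O$'', so no generality is lost.
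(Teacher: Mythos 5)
Your proposal is correct and takes essentially the same route as the paper: the paper's own proof is exactly the observation (made in the remark preceding the corollary, via Corollary~\ref{C:fjkh}) that the good set $Q$ from the proof of Theorem~\ref{T:dskfjh} is co-null for Lebesgue measure, whence $\nu(Q) = \lambda(Q) = 1$ by absolute continuity and Floyd's lemma applies to $range(q)$ for any $q \in Q$. Your additional care about the functions being defined on the closed box $[0,1]^k$ rather than on $\mathbb{R}^k$ (where Proposition~\ref{P:kdfh} is stated) addresses a point the paper silently glosses over, and your fix via extension to an open neighborhood, or the identity theorem on the connected interior, is sound.
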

\begin{proof}
Observe that $q \in Q$ almost surely, because $\nu(Q) = \lambda(Q) = 1$.
\end{proof}


Note that for any set of real variables $V=\{v_1,\ldots,v_m\}$, distinct monomials arising from $V$ are linearly independent and analytic.  Thus the above results apply, in particular, to polynomial functions and their sets of positivity. 

This generalizes the Floyd/Dudley result which states that the set of open balls in a Euclidean space has the maximum property on points in general position.

It also applies to some functions which seem not to have been considered before, such as trigonometric polynomials.  That is, functions of the form

$$t(x,y;a_0,a_1,\ldots,a_N,b_1,\ldots,b_N) = a_0 + \sum_{n=1}^N a_n\cos(nx) + \sum_{n=1}^N b_n\sin(nx) - y$$
where the $a_i$ and $b_i$ are viewed as parameters.  The Wronskian criterion \cite{So01} for the linear independence of functions can be used to generate still more examples.

The notion of samples which are dense (in the product topology) with certain properties has been undertaken by Sontag in the context of neural networks \cite{S95}.

\end{document}